\numberwithin{equation}{subsection}
\theoremstyle{plain}
\newtheorem{thm}[subsection]{Theorem}
\newtheorem{prop}[subsection]{Proposition}
\newtheorem{lemma}[subsection]{Lemma}
\theoremstyle{definition}
\theoremstyle{remark}
\numberwithin{subsection}{section}
\numberwithin{equation}{section}
\newcommand{\Z}{\mathbb{Z}}
\newcommand{\Q}{\mathbb{Q}}
\newcommand{\C}{\mathbb{C}}
\renewcommand{\P}{\mathbb{P}}
\newcommand{\into}{\hookrightarrow}
\newcommand{\tensor}{\otimes}
\DeclareMathOperator{\gal}{Gal}
\DeclareMathOperator{\en}{End}
\begin{document}

\title{Low-dimensional factors of superelliptic Jacobians}

\author{Thomas Occhipinti}
\address{Department of Mathematics, Carleton College, 
One North College Street,
Northfield, MN 55057 USA}
\email{tocchipinti@carleton.edu}

\author{Douglas Ulmer}
\address{School of Mathematics, Georgia Institute of Technology, 
686 Cherry Street, Atlanta, GA 30332 USA}
\email{ulmer@math.gatech.edu}


\subjclass[2010]{Primary 14H40;
Secondary 14H45, 14K12}

\begin{abstract}
  Given a polynomial $f\in\C[x]$, we consider the family of
  superelliptic curves $y^d=f(x)$ and their Jacobians $J_d$ for
  varying integers $d$.  We show that for any integer $g$ the number
  of abelian varieties up to isogeny of dimension $\le g$ which appear
  in any $J_d$ is finite and their multiplicities are bounded.
\end{abstract}

\keywords{Jacobian, superelliptic curve, abelian variety}

\maketitle

\section{Introduction}

Given a family of abelian varieties, it is often interesting to ask
about the existence and multiplicity of small-dimensional abelian
subvarieties.  For example, in \cite{EkedahlSerre93}, Ekedahl and
Serre ask several questions about curves over $\C$ whose Jacobian is
isogenous to a power of an elliptic curve.

Related questions about Fermat Jacobians are discussed in
\cite{Ulmer07c}.  Let $F_d$ denote the Fermat curve of degree $d$ over
a field of characteristic zero.  In \cite[Thm.~3.2]{Ulmer07c} it is
proven that for every $g>0$, only finitely many abelian varieties up
to isogeny of dimension $\le g$ appear as subvarieties of the Jacobian
of $F_d$ for any $d$.  The goal of this note is to generalize that
result to a large class of superelliptic curves.  Our results apply
over any field of characteristic zero, but for simplicity of language
we work over the complex numbers $\C$.

Fix a polynomial $f\in\C[x]$ of degree $n>0$ and let $e$ be the largest
positive integer such that $f=f_0^e$ where $f_0$ is a polynomial.  If
$d$ is a positive integer which is relatively prime to $e$, then the
polynomial $y^d-f(x)$ is irreducible.  We may thus define $C_{f,d}$
to be the smooth, projective curve over $\C$ associated to the affine
plane curve
\begin{equation}\label{eq:curve}
y^d-f(x)=0.
\end{equation}
If $f_0$ is linear, then the genus of $C_{f,d}$ is zero for all $d$
prime to $e$.  On the other hand, if $\deg f_0>1$, then the genus of
$C_{f,d}$ tends to infinity with $d$.

Here is our main result:

\begin{thm}\label{thm:main0}
  For a fixed polynomial $f$ as above and a fixed positive integer
  $g$, only finitely many abelian varieties up to isogeny of dimension
  $\le g$ appear as subvarieties of the Jacobians of $C_{f,d}$ as $d$
  varies over all positive integers relatively prime to $e$.
\end{thm}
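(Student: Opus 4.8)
The plan is to exploit the cyclic automorphism $\sigma\colon(x,y)\mapsto(x,\zeta_d y)$ of $C_{f,d}$, where $\zeta_d$ is a primitive $d$-th root of unity; this generates a faithful action of $\mu_d$ on $J_d$. For each $m\mid d$ the map $(x,y)\mapsto(x,y^{d/m})$ is a dominant morphism $C_{f,d}\to C_{f,m}$, so the $\mu_d$-action furnishes an isogeny decomposition
\begin{equation}\label{eq:decomp}
J_d\sim\prod_{1<m\mid d}P_m,
\end{equation}
where $P_m$ is the isogeny factor on which $\mu_m$ acts through characters of exact order $m$. The essential point is that $P_m$ depends only on $m$ and $f$, not on the ambient $d$, so that the set of isogeny factors occurring in the $J_d$, as $d$ ranges over integers prime to $e$, is exactly the union over all $m>1$ prime to $e$ of the isogeny factors of the single abelian variety $P_m$. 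Each $P_m$ carries a faithful action of $\Q(\zeta_m)$, namely the image of $\Q[\mu_m]$ acting on the primitive part, so $H_1(P_m,\Q)$ is a $\Q(\zeta_m)$-vector space and $\phi(m)\mid 2\dim P_m$.

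Next I would control the size per character. Using the standard description of the $\zeta_m^{\,j}$-eigenspaces of $H^0(C_{f,m},\Omega^1)$ in terms of the fractional parts $\langle j\,m_i/m\rangle$, where the $m_i$ are the multiplicities of the roots of $f$, one computes the eigenspace dimensions $d_\chi$ and finds that the $\Q(\zeta_m)$-rank $r_m:=2\dim P_m/\phi(m)$ is bounded independently of $m$, by a quantity depending only on the number and multiplicities of the roots of $f$ (so, roughly, by $\deg f_0$). Thus every $P_m$ has the very constrained shape $\dim P_m=\tfrac12\phi(m)\,r_m$ with $r_m\le C(f)$, together with a polarization under which the Rosati involution induces complex conjugation on $\Q(\zeta_m)$.

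Now fix a simple factor $B$ of some $P_m$ with $\dim B\le g$. Since every endomorphism preserves isotypic components, the $B$-isotypic part $B^k\subseteq P_m$ is stable under $\Q(\zeta_m)$, giving an embedding $\Q(\zeta_m)\hookrightarrow M_k(\en(B)\tensor\Q)$ compatible with the polarization. Combined with the Hodge datum $(d_\chi)$, this forces $B$ to be of CM type, with CM field a subfield $K_B\subseteq\Q(\zeta_m)$ of degree $[K_B:\Q]=2\dim B\le 2g$ and CM-type $\Phi_B$ obtained by restricting to $K_B$ the CM-type of $P_m$ read off from the $d_\chi$. The isogeny class of $B$ is then pinned down by the pair $(K_B,\Phi_B)$.

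It remains to prove that only finitely many such pairs occur, and this is the main obstacle. One cannot simply bound $m$: a fixed small CM factor may reappear in $P_m$ for infinitely many $m$, its multiplicity $k$ growing like $\phi(m)$, so the finiteness is genuinely a statement about the set of isogeny classes rather than about the level. The leverage is the rigidity of the fractional-part data: the type $\Phi_B$ is not free but is cut out by the fixed system of relations among the $\langle j\,m_i/m\rangle$ determined once and for all by $f$. I would show that requiring this induced type to descend to a field of degree $\le 2g$ is so restrictive that it forces a subgroup of $(\Z/m)^\times$ of index $\le 2g$ to preserve the pattern of the $d_\chi$; the fractional-part formula admits such a symmetry only when the conductor of $K_B$ is bounded in terms of $f$ and $g$. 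Bounded conductor and bounded degree leave finitely many fields $K_B$, hence finitely many pairs $(K_B,\Phi_B)$, and the hard core of the argument is precisely this conductor bound. Together with~\eqref{eq:decomp} and the finiteness of the factors of each individual $P_m$, this yields the theorem.
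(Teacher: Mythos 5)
Your reduction to the ``primitive parts'' $P_m$ (the paper's $J^{new}_m$), the observation that $P_m$ depends only on $m$ and $f$, and the bound on the $\Q(\zeta_m)$-rank $r_m$ are all correct and match the paper's setup. The first genuine gap is the sentence ``Combined with the Hodge datum $(d_\chi)$, this forces $B$ to be of CM type.'' This is unjustified and, as stated, false: the embedding $\Q(\zeta_m)\into M_k(\en(B)\tensor\Q)$ imposes no constraint on $B$ once the multiplicity $k$ is large --- for instance a non-CM elliptic curve $B$ admits $\Q(\zeta_m)\into M_k(\Q)$ as soon as $\phi(m)$ divides $k$ --- and the Hodge data do not rescue the claim. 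Concretely, take $m=2$ and $f$ generic of degree $3$ or $4$: then $P_2=J_2$ is a generic (non-CM) elliptic curve, a simple factor of dimension $1\le g$ with no CM field $K_B$ and no CM type $\Phi_B$, so the classification-by-CM-pairs strategy has nothing to classify. What is actually true, and what the paper uses, is much weaker and needs no CM hypothesis: the center $K$ of $\en(B)\tensor\Q$ is a number field of degree $\le 2g$, and the characteristic polynomial of $\zeta_m$ on the $B$-isotypic part of $H^0(J^{new}_m,\Omega^1)$ has an irreducible constituent with coefficients in $K$.

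The second, decisive gap is that the finiteness you need is never proved: you say yourself that the conductor bound is ``the hard core of the argument'' and only describe what you would show. That hard core is precisely where the theorem's content lies, and the paper supplies it by three steps absent from your proposal: (i) Galois theory --- an irreducible polynomial over a field of degree $\le 2g$ whose roots are primitive $m$-th roots of unity has root set equal to a coset of a subgroup $H\subset(\Z/m\Z)^\times$ of index $\le 2g$; (ii) equidistribution --- by Gauss-sum estimates, once $m$ is large every coset of every such $H$ contains an element whose least positive residue lies in $(0,m/n)$; (iii) ill-distribution --- the eigenvalue $\zeta_m^{\,j}$ cannot occur on $H^0(J^{new}_m,\Omega^1)$ when the least residue of $j$ is $<m/n$, by the fractional-part dimension formula you cite. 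The contradiction among (i)--(iii) shows that for $m$ large $J^{new}_m$ has \emph{no} factors of dimension $\le g$ whatsoever, which simultaneously repairs your missing CM step (vacuously) and eliminates any need to bound conductors or enumerate CM types; the finitely many remaining $m$ contribute finitely many isogeny factors each. Your closing paragraph gestures at (i) (a small-index subgroup preserving the pattern of the $d_\chi$), but without (ii) and (iii) carried out, the proposal is a plan, not a proof.
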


This result can be used to give more examples of isotrivial elliptic
curves (and abelian varieties) over $\C(t)$ which have bounded
Mordell-Weil ranks in the tower of extensions $\C(t^{1/d})$, along the
lines of the discussion in \cite[\S4]{Ulmer07c}.  We leave the details
as an exercise for the reader.

We fix $f$ and write $J=J_d$ for the Jacobian of $C_{f,d}$.  It will
be convenient to reformulate the theorem in terms of the ``new part''
of $J$.  Note that if $d'$ divides $d$, then there is a surjective
morphism $C_{f,d}\to C_{f,d'}$ which induces a homomorphism of
Jacobians $J_{d'}\to J_d$.  We define $J^{new}=J^{new}_d$ to be the
quotient of $J_d$ by the sum of the images of these morphisms for all
proper divisors $d'$ of $d$.  It is not hard to show that $J_d$ is
isogenous to the product of $J^{new}_{d'}$ with $d'$ ranging over all
divisors of $d$.  Theorem~\ref{thm:main0} is thus implied by the
following result.

\begin{thm}\label{thm:main}
  For all integers $n>0$ and $g>0$, there is a constant $d_0$
  depending only on $n$ and $g$ with the following property: If $f$ is
  a polynomial of degree $n$ and $e$ is the largest positive integer
  such that $f=f_0^e$, and if $d$ is relatively prime to $e$ and
  $>d_0$, then $J^{new}_d$ has no abelian subvarieties of dimension
  $\le g$.
\end{thm}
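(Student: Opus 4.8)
The plan is to exploit the cyclic automorphism of $C_{f,d}$ given by $\sigma\colon(x,y)\mapsto(x,\zeta_d y)$, where $\zeta_d$ is a primitive $d$th root of unity. This generates an action of $\mu_d$, hence of $\Q[\mu_d]$, on $J_d$ and on $H^1(C_{f,d},\Q)$. First I would check that on the new part the minimal polynomial of $\sigma$ is the $d$th cyclotomic polynomial, so that $\Q[\mu_d]$ acts through $\Q[\mu_d]/(\Phi_d)\cong\Q(\zeta_d)$; this yields an embedding $\Q(\zeta_d)\into\en^0(J^{new}_d)$ and makes $H^1(J^{new}_d,\Q)$ a $\Q(\zeta_d)$-vector space. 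Over $\C$ this cohomology decomposes as $\bigoplus_\chi H^1_\chi$, with $H^1_\chi$ the $\chi$-eigenspace and $\chi$ running over the $\phi(d)$ primitive characters of $\mu_d$. I would then bound the ``width'' of these eigenspaces uniformly in $d$: using the classical basis of differentials $x^i\,dx/y^j$ and the Chevalley--Weil count of which are holomorphic, one sees that $\dim_\C H^1_\chi$ is bounded by a constant depending only on $n=\deg f$ (roughly $2(n-1)$), independently of $d$ and of $\chi$.

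The key observation is that $\gal(\Q(\zeta_d)/\Q)\cong(\Z/d\Z)^\times$ permutes the primitive characters simply transitively, compatibly with its action on $H^1(J^{new}_d,\Q)\tensor\C$ arising from the rational structure, via $\sigma(H^1_\chi)=H^1_{\chi^\sigma}$. Consequently, for any abelian subvariety $B\subseteq J^{new}_d$, the rational Hodge structure $W=H^1(B,\Q)$ is stable under $\gal(\Q(\zeta_d)/\Q)$, so the set of primitive $\chi$ for which $W\tensor\C$ has nonzero image in $H^1_\chi$ is stable under a transitive group and, being nonempty, is all of them. When $B$ is stable under $\mu_d$ (equivalently $W$ is a $\Q(\zeta_d)$-submodule), the eigen-intersections give a genuine direct sum, so $\sum_\chi\dim\bigl((W\tensor\C)\cap H^1_\chi\bigr)=\dim_\C(W\tensor\C)=2\dim B$; since each term is at least $1$, this forces $\phi(d)\le 2\dim B\le 2g$, which bounds $d$.

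The main obstacle is the general case, where a simple factor $B$ of dimension $\le g$ need not be $\mu_d$-stable. Replacing $B$ by the smallest $\mu_d$-stable subvariety $A=\sum_{\gamma\in\mu_d}\gamma B$ containing it, one has $A\sim B^m$ and $\Q(\zeta_d)\into\en^0(A)=M_m(\en^0(B))$, but now the eigenspaces of $H^1(A)$ have the larger dimension $2m\dim B/\phi(d)$, and the transitivity argument only yields $\phi(d)\le 2m\dim B$. To finish I must bound the multiplicity $m$ independently of $d$. Here I would combine the width bound $2m\dim B/\phi(d)\le 2(n-1)$ with the constraints from $\en^0(B)$ (whose $\Q$-dimension is at most $2\dim B\le 2g$) and, crucially, with the fact that the Hodge numbers $\dim H^{1,0}_\chi$ of the superelliptic eigenspaces vary nontrivially with $\chi$: a fixed low-dimensional $B$ occurring with high multiplicity would force the CM-type of $B^m$ to match the prescribed, nonconstant, superelliptic Hodge data simultaneously for all $\phi(d)$ characters, which is impossible once $d$ is large. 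Turning this nonconstancy of the eigenspace Hodge numbers into an effective bound on $m$ is the heart of the argument; granting it, one obtains $\phi(d)\le c(n,g)$ and hence the desired $d_0=d_0(n,g)$.
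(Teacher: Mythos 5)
Your framework is sound as far as it goes, and it runs parallel to the paper's proof up to a point: the paper likewise passes to the $\mu_d$-stable isotypic component of a putative small simple factor (your $A=B^m$ is its $A^a$), and your inequality $\phi(d)\le 2m\dim B$, obtained from $\Q(\zeta_d)\into\en^0(B^m)$ in the stable case, is correct. The genuine gap is the final step, which you yourself flag as ``the heart of the argument'' and then grant: a bound $m\le c(n,g)$ independent of $d$. This is not a technical lemma that can be deferred --- given your reduction, bounding $m$ immediately bounds $\phi(d)$ and hence $d$, so this step is essentially equivalent to the theorem itself, and the mechanism you offer for it is an assertion, not an argument. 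Concretely, the constraints you list do not exclude the scenario you must exclude. Take $B=E$ an elliptic curve with CM by an imaginary quadratic field $K_0\subset\Q(\zeta_d)$ and $m=\phi(d)/2$: then $\Q(\zeta_d)$ does embed in $\en^0(E^m)=M_m(K_0)$, the eigenspace ``width'' is $2m\dim B/\phi(d)=1$, well within your bound, and abstract CM types on $\Q(\zeta_d)$ avoiding any prescribed set of characters exist in abundance, so ``nonconstant Hodge data'' by itself forbids nothing. What kills this scenario is a structural fact your toolkit never touches: the characters occurring in $H^{1,0}$ of such an $A$ form a full coset of a \emph{small-index} subgroup of $(\Z/d\Z)^\times$ (here, the fiber over one embedding of $K_0$, a coset of $\gal(\Q(\mu_d)/K_0)$), together with a quantitative reason why such a coset cannot avoid the range where the superelliptic eigenspaces vanish.

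The paper supplies exactly these two missing ingredients and thereby never needs to bound the multiplicity at all. First (Proposition~\ref{prop:small-factor}), instead of counting dimensions it examines the \emph{field of coefficients}: since $\en^0(A^a)=M_a(D)$ with $D$ simple of center $K$, $[K:\Q]\le 2g$, every irreducible constituent of $H^0(A^a,\Omega^1)$ has characteristic polynomial for $\zeta$ with coefficients in $K$ --- a bound independent of the multiplicity $a$. By elementary Galois theory (Lemma~\ref{lemma:Galois}), the exponents of the roots of such an irreducible factor then form a full coset of a subgroup $H\subset(\Z/d\Z)^\times$ of index $\le 2g$. Second --- and this is the analytic input that your phrase ``impossible once $d$ is large'' silently requires --- Proposition~\ref{prop:equidistribution} shows via Gauss-sum estimates and the Weyl criterion that for $d$ large \emph{every} coset of \emph{every} subgroup of index $\le 2g$ contains an element of least residue in $(0,d/n)$; combined with the vanishing $V_j=0$ for such $j$ (Proposition~\ref{prop:ill-distribution}, which is the sharp form of your ``nonconstant Hodge data''), this yields the contradiction outright. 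So your intuition about the imbalanced eigenspace dimensions points in the right direction, but turning it into a proof requires replacing your dimension count by the coefficient-field/coset argument and supplying the equidistribution theorem; without them the multiplicity $m$ remains unbounded and the proof does not close.
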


Note that Theorem~\ref{thm:main} is stronger than
Theorem~\ref{thm:main0} because it shows that the dependence on $f$ is
only through its degree.

The proof of Theorem~\ref{thm:main} will be given in
Sections~\ref{s:complex} through \ref{s:conclusion}.  At the end of
Section~\ref{s:conclusion} we discuss what we know or expect in other
contexts, such as when $f$ is a rational function or when the ground
field has positive characteristic.

We thank the anonymous referee for a careful reading of the paper and
for several useful suggestions.

\section{The complex representation}\label{s:complex}
We keep the notations and definitions of the introduction.  The curve
$C$ is a Galois cover of $\P^1$ with Galois group the $d$-th roots of
unity $\mu_d$.  In terms of the equation \eqref{eq:curve}, $\mu_d$ acts
via multiplication on the coordinate $y$.  We will study the action of
$\zeta$ on the regular 1-forms of $C$, or equivalently on the regular
1-forms of $J$.  This is sometimes called the complex representation
of $J$.

Fix once and for all a primitive $d$-th root of unity $\zeta$.
We have a direct sum decomposition
$$H^0(J,\Omega^1)=H^0(C,\Omega^1)=\bigoplus_{j=0}^{d-1}V_j$$
where $V_j$ is the subspace where $\zeta$ acts by multiplication by
$\zeta^j$.  It will be convenient to view the subscripts $j$ as lying
in $\Z/d\Z$.

Note that since $C_{f,d'}$ is the quotient of $C_{f,d}$ by the
subgroup of $\mu_d$ generated by $\zeta^{d'}$, we may identify
$H^0(C_{f,d'},\Omega^1)$ with the subspace 
$$\bigoplus_{j=0}^{d'-1}V_{(d/d')j}.$$
It follows that we have 
$$H^0(J^{new},\Omega^1)\cong\bigoplus_{j\in(\Z/d\Z)^\times}V_j.$$
In particular, the roots of the characteristic polynomial of $\zeta$
on $H^0(J^{new},\Omega^1)$ are primitive $d$-th roots of unity.

A crucial point in our analysis is that the dimensions of the $V_j$
are not evenly distributed.  More precisely:

\begin{prop}\label{prop:ill-distribution}
  If $j\in(\Z/d\Z)^\times$ has least positive residue
  $<d/n=d/\deg(f)$, then $V_j=0$.
\end{prop}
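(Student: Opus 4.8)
The plan is to make the eigenspaces $V_j$ completely explicit by exhibiting the holomorphic differentials lying in them and then reading off a vanishing criterion from the behaviour over $x=\infty$. First I would record the normal form of an eigendifferential. Since $\C(C)^{\mu_d}=\C(x)$ and $dx$ is $\mu_d$-invariant, any meromorphic $1$-form in $V_j$ can be written as
$$\omega=\frac{u(x)\,dx}{y^{b}},\qquad u\in\C(x),$$
where $b$ is the least positive residue of $j$ modulo $d$; one checks that $dx/y^{b}$ is an eigenform with eigenvalue $\zeta^{j}$ for the action fixed above, and multiplication by the invariant field $\C(x)$ preserves $V_j$. Computing $V_j$ thus reduces to deciding, for which $u$, the form $\omega$ is holomorphic, a purely local question at the ramification points of $C\to\P^1$.

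Next I would dispose of the finite branch points. Write $f=c\prod_k(x-\gamma_k)^{n_k}$ with the $\gamma_k$ distinct. Over $\gamma_k$ the cover is ramified and $y$ vanishes, so $1/y^{b}$ has a pole there; imposing $v_P(\omega)\ge0$ at every point $P$ over $\gamma_k$ forces $v_{\gamma_k}(u)\ge0$ (indeed $u$ must vanish there to a specific nonnegative order). At every other finite point $x$ is a local coordinate and holomorphy merely says $u$ has no pole. Consequently $u$ is forced to be a \emph{polynomial}; in particular $\deg u\ge0$ whenever $u\ne0$.

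The decisive step is the behaviour over $\infty$. Writing $g=\gcd(d,n)$, $d'=d/g$, $n'=n/g$, there are $g$ points $Q$ over $\infty$, each with ramification index $d'$, and a local computation gives $v_Q(x)=-d'$, $v_Q(y)=-n'$, $v_Q(dx)=-d'-1$. Hence for $\omega=u\,dx/y^{b}$ with $\deg u=\delta$,
$$v_Q(\omega)=-\delta d'+bn'-d'-1,$$
and holomorphy at $Q$ forces $\delta\le \tfrac{bn}{d}-1-\tfrac1{d'}$. If the least positive residue $b$ satisfies $b<d/n$, then $bn/d<1$ and the right-hand side is negative, contradicting $\deg u\ge0$ for a nonzero polynomial. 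Therefore the only holomorphic form in $V_j$ is $0$, that is, $V_j=0$.

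The main obstacle here is bookkeeping rather than ideas: one must pin down the normal form and the correct exponent $b$ compatibly with the chosen $\mu_d$-action, and carry out the valuation computations at the possibly several (tamely) ramified points lying over each $\gamma_k$ and over $\infty$ when $\gcd(d,n_k)$ or $\gcd(d,n)$ exceeds $1$. It is worth emphasizing that only $n=\deg f$ enters the decisive inequality at $\infty$—the finite branch points contribute nothing beyond forcing $u$ to be a polynomial—which is precisely why this bound, and ultimately Theorem~\ref{thm:main}, depends on $f$ only through its degree.
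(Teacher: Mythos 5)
Your proof is correct, but it takes a more self-contained route than the paper. The paper's proof is essentially a citation: it notes the standard normal form $g(x)\,dx/y^k$ for eigendifferentials and then quotes the dimension formula of \cite[Thm.~6.7]{Archinard03}, which in the paper's notation reads
$$\dim V_j=-\left\langle\frac{jn}{d}\right\rangle+\sum_{i=1}^m\left\langle\frac{jee_i}{d}\right\rangle,$$
and observes that when $d>n$ and the least residue of $j$ is $<d/n$, every fractional part equals the fraction itself, so the right side collapses to $-jn/d+jn/d=0$. You instead prove the vanishing directly from the same normal form $u(x)\,dx/y^{b}$: the finite branch points force $u$ to be a polynomial (your deduction is sound --- tameness gives $e_P\,v_{\gamma_k}(u)\ge 1-e_P+bn_k/\gcd(d,n_k)>-e_P$, hence $v_{\gamma_k}(u)\ge0$), and the valuations at infinity ($v_Q(x)=-d'$, $v_Q(y)=-n'$, $v_Q(dx)=-d'-1$, all correct) force $\deg u\le bn/d-1-1/d'<0$ once $b<d/n$, a contradiction. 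The paper's route buys brevity and the exact dimension of every $V_j$; yours buys self-containedness (you need only the vanishing, not the full formula) and it makes transparent why only $\deg f$ matters: the finite branch points contribute nothing beyond polynomiality of $u$, and the decisive inequality lives at infinity. That observation, which you make explicitly, is also exactly why the proposition fails when $f$ is allowed to be a rational function, as the paper remarks in its closing section.

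One wrinkle deserves more than your phrase ``one checks'': with the action as literally stated in the paper (pullback under $y\mapsto\zeta y$), the form $dx/y^{b}$ has eigenvalue $\zeta^{-b}$ and so lies in $V_{-b}$, not $V_{b}$. Your convention (eigenvalue $\zeta^{b}$) is the one under which the proposition is literally true; amusingly, the paper itself is inconsistent here, since its proof asserts $k\equiv-j\pmod d$ while its restatement of Archinard's formula is only compatible with $k\equiv j\pmod d$. The discrepancy is harmless --- replacing $\zeta$ by $\zeta^{-1}$ permutes the $V_j$, and both the statement and its application to Theorem~\ref{thm:main} survive this relabeling --- but since the truth of the statement for a fixed $j$ depends on the choice, your writeup should fix the convention explicitly rather than leave it implicit.
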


\begin{proof}
  A standard calculation gives a basis of 1-forms on $C$ consisting of
  forms of the shape $g(x)dx/y^k$, and the dimension of $V_j$ is the
  number of basis elements with $k\equiv -j\pmod d$.  See
  \cite[\S6]{Archinard03} for a detailed account.  The proposition is
  an immediate consequence of the dimension formula
  \cite[Thm.~6.7]{Archinard03}.

  For the convenience of the reader, we restate
  \cite[Thm.~6.7]{Archinard03} using our notation.  For this, write
$$f_0=c\prod_{i=1}^m(x-\lambda_i)^{e_i}.$$
Then the formula in question says that if $\gcd(j,d)=1$ and $d$ does
not divide any of $ee_1,\dots,ee_m$ nor $n$, then
$$\dim V_j=-\left\langle\frac{jn}d\right\rangle
+\sum_{i=1}^m\left\langle\frac{jee_i}d\right\rangle.$$ Here $\langle
r\rangle$ denotes the fractional part of a real number $r$.  It
follows that if $d>n$ and $j<d/(e\max\{e_i\})\le d/n$, then $\dim
V_j=0$, and this is exactly the assertion of the proposition.
\end{proof}

\section{Consequences of a small factor}
In this section we show that if $J^{new}$ has an abelian subvariety of
small dimension, then the characteristic polynomial of $\zeta$ on
$H^0(J^{new},\Omega^1)$ has a factor with coefficients in a number
field of small degree.  More precisely:

\begin{prop}\label{prop:small-factor}
  Suppose that $J^{new}$ has an abelian subvariety of positive
  dimension $\le g$.  Then there exists a number field $K\subset\C$ of
  degree $\le2g$ and a polynomial $P$ of positive degree with
  coefficients in $K$ such that $P$ divides the characteristic
  polynomial on $\zeta$ on $H^0(J^{new},\Omega^1)$.
\end{prop}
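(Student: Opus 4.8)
The plan is to produce the factor $P$ not from the whole of $J^{new}$ but from the single isotypic piece cut out by a simple subvariety, exploiting the interaction between the cyclotomic action of $\zeta$ and the rationality of a space of homomorphisms.

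First I would reduce to the case that the given subvariety $A\subseteq J^{new}$ of dimension $\le g$ is simple, since any nonzero abelian subvariety contains a simple one of no larger dimension. Recall from Section~\ref{s:complex} that $\zeta$ acts on $J^{new}$ and that its eigenvalues on $H^0(J^{new},\Omega^1)$ are primitive $d$-th roots of unity; in particular the minimal polynomial of $\zeta$ on $H^1(J^{new},\Q)$ is $\Phi_d$, so $\zeta$ generates a copy of the cyclotomic field $\Q(\zeta)$ inside $\en^0(J^{new})$. Because $\zeta$ is an automorphism, it carries $A$ to an isogenous simple subvariety, and hence preserves the $A$-isotypic component $B\subseteq J^{new}$ (the sum of all subvarieties isogenous to $A$). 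Thus $B$ is a $\zeta$-stable isogeny factor, the restriction $H^0(J^{new},\Omega^1)\to H^0(B,\Omega^1)$ is surjective and $\zeta$-equivariant, and the characteristic polynomial of $\zeta$ on $H^0(B,\Omega^1)$ divides the one on $H^0(J^{new},\Omega^1)$. It therefore suffices to find a positive-degree factor of the former that is defined over a field of degree $\le 2g$.

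Next I would analyze the eigenvalues of $\zeta$ on $H^0(B,\Omega^1)$ through the multiplicity space $M:=\mathrm{Hom}^0(A,B)$. Writing $D=\en^0(A)$ with center the field $E$, the space $M$ is a left $\Q(\zeta)$-module (via $\zeta$ acting on $B$) and a right $D$-module, hence in particular a module over the commutative $\Q$-algebra $\Q(\zeta)\otimes_\Q E$; crucially $M$ is defined over $\Q$ and of Hodge type $(0,0)$. The evaluation map is an isomorphism of Hodge structures $M\otimes_D H^1(A,\Q)\xrightarrow{\sim}H^1(B,\Q)$, and taking $(1,0)$-parts (after extending scalars to $\C$) yields $H^0(B,\Omega^1)\cong M\otimes_D H^0(A,\Omega^1)$ with $\zeta$ acting through the factor $M$ alone. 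Consequently a primitive root $\zeta^j$ occurs as an eigenvalue of $\zeta$ on $H^0(B,\Omega^1)$ exactly when the $\zeta^j$-eigenspace of $M_\C$, viewed through its $E$-module structure, meets the support (the ``CM type'') of the $D_\C$-module $H^0(A,\Omega^1)$.

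The key point, and the step I expect to be the main obstacle, is that the eigenvalues so obtained fall into \emph{complete} Galois orbits over a small field, despite the Hodge decomposition not being Galois-equivariant in general. This is rescued by the rationality of $M$: decomposing $\Q(\zeta)\otimes_\Q E\cong\prod_w F_w$ into a product of fields, the support of the rational module $M$ inside $\mathrm{Hom}(\Q(\zeta),\C)\times\mathrm{Hom}(E,\C)$ is a union of $\gal(\overline{\Q}/\Q)$-orbits. Fixing one embedding $\tau$ occurring in the CM type of $A$, the set of $\sigma$ with $(\sigma,\tau)$ in this support is a full coset of $H:=\gal(\Q(\zeta)/K)$ for $K:=\Q(\zeta)\cap\tau(E)$, using that $\Q(\zeta)/\Q$ is abelian; hence the corresponding exponents $j$ form a complete $H$-orbit lying in the support of the eigenvalues on $H^0(B,\Omega^1)$. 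Finally, since $A$ is simple, $H^1(A,\Q)$ is a left $D$-vector space, so $[E:\Q]\le[D:\Q]\le 2\dim A\le 2g$ and thus $[K:\Q]\le 2g$. Taking $P$ to be the minimal polynomial over $K$ of $\zeta^{j}$ for such a $j$ produces a polynomial of positive degree with coefficients in $K$, all of whose roots are roots of the characteristic polynomial of $\zeta$ on $H^0(J^{new},\Omega^1)$, which completes the argument.
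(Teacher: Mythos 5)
Your proof is correct in substance, but it takes a genuinely different route from the paper's. The paper makes the same initial reduction (pass to a simple $A$ and its $\zeta$-stable isotypic part, your $B$, the paper's $A^a$), but then finishes in one purely algebraic stroke: the restriction of $\zeta$ to the isotypic part is an element of $\en(A^a)\tensor\Q\cong M_a(D)$, a central simple algebra over the center of $D$, and the characteristic polynomial of \emph{any} element of such an algebra on any irreducible complex constituent has coefficients in that center (the reduced characteristic polynomial); no Galois theory, no Hodge-theoretic tensor decomposition, and no use of the fact that $\zeta$ is a root of unity. You instead route through the multiplicity space $M=\mathrm{Hom}(A,B)\tensor\Q$ as a rational module over $\Q(\zeta)\tensor_\Q E$ and the Galois-stability of its support, exploiting that $\Q(\zeta)/\Q$ is abelian. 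This is heavier machinery, but it buys something real: it exhibits the roots of $P$ directly as a full coset of $H=\gal(\Q(\zeta)/K)$ with $[K:\Q]\le 2g$, which is exactly the output of Proposition~\ref{prop:small-factor} \emph{combined with} Lemma~\ref{lemma:Galois}; in your framework the paper's separate Galois-theory step becomes redundant. Two small repairs: (i) your ``evaluation map'' has the wrong variance --- morphisms $A\to B$ push forward on $H_1$, not on $H^1$, so the isomorphism should read $M\tensor_D H_1(A,\Q)\cong H_1(B,\Q)$, and you then dualize to reach $H^0(B,\Omega^1)$; this replaces each eigenvalue by its inverse, which permutes $H$-orbits, so nothing is lost. (ii) The set of $\sigma$ with $(\sigma,\tau)$ in the support of $M$ is a priori a \emph{nonempty union} of cosets of $H$ rather than a single coset (nonempty because $M$ is a nonzero $E$-vector space, so its $\tau$-component never vanishes); a full coset inside it is all you need.
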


Note that we may take the polynomial $P$ to be irreducible over $K$.
Note also that the roots of $P$ are primitive $d$-th roots of unity.

\begin{proof}
  Suppose that $A\subset J^{new}$ is an abelian subvariety of positive 
  dimension $\le g$.  We may assume that $A$ is simple.  Let $a>0$ be
  the integer such that $J^{new}$ is isogenous to $A^a\oplus B$ where
  $B$ is an abelian variety with no factors isogenous to $A$.  Fixing
  an isogeny yields an identification
$$H^0(J^{new},\Omega^1)\cong H^0(A^a,\Omega^1)\oplus
H^0(B,\Omega^1).$$ 
Since $A$ and $B$ have no isogeny factors in common, the action of
$\zeta$ preserves the subspace $H^0(A^a,\Omega^1)$.  Our claim will
follow from an inspection of the characteristic polynomial of $\zeta$
on $H^0(A^a,\Omega^1)$.

Let $D$ be the rational endomorphism algebra of $A$:
$D=\en(A)\tensor\Q$.  Then it is well known \cite[\S19]{MumfordAV} that
$D$ is a simple algebra whose center $K$ is a number field of degree
$\le2g$.  Let $[D:K]=f^2$.  The endomorphism algebra of $A^a$ is
$M_a(D)$, and the irreducible complex representations of this algebra
are indexed by the embeddings of $K$ into $\C$.  Given an
embedding, the corresponding representation is the composition
$$M_a(D)\into M_a(D)\tensor_K\C\cong M_{af}(\C)$$
(where the embedding is used to form the tensor product) acting by
left multiplication on $\C^{af}$.  If $\delta\in M_a(D)$, the eigenvalues
of the action of left multiplication by $\delta$ on the $K$-vector space
$M_a(D)$ are the eigenvalues of the corresponding element of
$M_{af}(\C)$ each repeated $af$ times.  It follows that the
characteristic polynomial of the element $\delta$ in either representation
has coefficients in $K$.  Therefore, we may take $P$ to be the
characteristic polynomial of $\zeta$ on an irreducible constituent of
$H^0(A^a,\Omega^1)$ viewed as a representation of $M_a(D)$.
\end{proof}

\section{Galois theory}
Next we consider the distribution of roots of a polynomial with
coefficients in a small field and with a root which is a root of
unity. More precisely:

\begin{lemma}\label{lemma:Galois}
  Let $P$ be an irreducible polynomial with coefficients in a number
  field $K$ of degree $\le2g$ over $\Q$.  Suppose that $P$ has
  primitive $d$-th roots of unity as roots.  Then there exists a
  subgroup $H\subset(\Z/d\Z)^\times$ of index $\le 2g$ such that the
  roots of $P$ are of the form $\zeta^{ab}$ for a fixed
  $a\in(\Z/d\Z)^\times$ and $b\in H$.
\end{lemma}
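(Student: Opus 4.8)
The plan is to translate the hypothesis into a statement about the action of the cyclotomic Galois group, and then to exploit the fact that $K$, having small degree, can only ``see'' a small quotient of $(\Z/d\Z)^\times$. Working inside $\C$, let $\zeta$ be the primitive $d$-th root of unity fixed in Section~\ref{s:complex} and set $M=\Q(\zeta)$. The cyclotomic isomorphism identifies $\gal(M/\Q)$ with $(\Z/d\Z)^\times$, where $b\in(\Z/d\Z)^\times$ corresponds to the automorphism $\sigma_b$ with $\sigma_b(\zeta)=\zeta^b$. Since every root of $P$ is a primitive $d$-th root of unity, each root is of the form $\zeta^c$ with $c\in(\Z/d\Z)^\times$, and I would record the roots as a subset $S\subset(\Z/d\Z)^\times$.

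First I would form the compositum $KM$ inside $\C$. Because $M/\Q$ is abelian, $KM/K$ is Galois, and restriction to $M$ gives an injective homomorphism $\gal(KM/K)\into\gal(M/\Q)=(\Z/d\Z)^\times$ whose image is the subgroup $H:=\gal(M/(K\cap M))$. By the standard theory of composita the index of $H$ in $(\Z/d\Z)^\times$ equals $[K\cap M:\Q]$, and since $\Q\subset K\cap M\subset K$ this degree divides $[K:\Q]\le 2g$. Hence $H$ has index $\le 2g$, and this is the subgroup asserted in the conclusion.

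Next I would bring in the irreducibility of $P$. As all roots of $P$ lie in $M$, the polynomial $P$ splits completely in $M$, hence in $KM$, so the splitting field $N$ of $P$ over $K$ satisfies $K\subseteq N\subseteq KM$. Since $P$ is irreducible over $K$ and $N/K$ is Galois, $\gal(N/K)$ acts transitively on the roots of $P$, and therefore so does $\gal(KM/K)$ through its restriction onto $\gal(N/K)$. Transporting this action through the isomorphism $\gal(KM/K)\cong H$ and the identification of the roots with $S$, one checks that $H$ acts on $S$ by multiplication: the automorphism $\sigma_b$ sends the root $\zeta^c$ to $\zeta^{bc}$. Transitivity then says $S$ is a single $H$-orbit, so fixing any $a\in S$ (that is, any exponent with $\zeta^a$ a root of $P$) yields $S=\{ab:b\in H\}$, which is exactly the claimed description of the roots.

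I do not expect a serious obstacle, as the argument is a direct application of the Galois theory of abelian cyclotomic extensions. The one point requiring care is the index computation: one must see that the relevant invariant is $[K\cap M:\Q]$ rather than $[K:\Q]$ itself, which is precisely the content of identifying the image of the restriction map as $\gal(M/(K\cap M))$. Once that identification is in place, the fact that $[K\cap M:\Q]$ divides $[K:\Q]\le 2g$ gives the clean bound $\le 2g$ on the index of $H$.
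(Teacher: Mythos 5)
Your proof is correct and takes essentially the same route as the paper's: the paper's opening reduction ``we may assume that $K$ is a subfield of $\Q(\mu_d)$'' is exactly your identification of the image of $\gal(KM/K)$ with $\gal(M/(K\cap M))$, and both arguments then combine transitivity of the Galois action on the roots of the irreducible polynomial $P$ with the index bound $[K\cap M:\Q]\le[K:\Q]\le 2g$. Your write-up merely makes explicit the compositum bookkeeping that the paper's two-sentence proof leaves implicit.
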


\begin{proof}
  This is a simple exercise in Galois theory:  Since the roots of $P$
  lie in $\Q(\mu_d)$, we may assume that $K$ is a subfield of
  $\Q(\mu_d)$.  The splitting field of $P$ is $\Q(\mu_d)$, so the
  roots of $P$ are some fixed primitive $d$-th root $\zeta^a$ and its
  conjugates under $H=\gal(\Q(\mu_d)/K)$.
\end{proof}

\section{Equidistribution}
The last step in the proof of the main theorem is to note that
subgroups of $(\Z/d\Z)^\times$ with small index are in a
certain sense equidistributed.  More precisely:

\begin{prop}\label{prop:equidistribution}
  Given positive integers $n$ and $g$, there exists an integer $d_0$
  depending only on $n$ and $g$ such that if $d>d_0$ and
  $H\subset(\Z/d\Z)^\times$ is a subgroup of index $2g$, then every
  coset of $H$ has an element whose least residue lies in the interval
  $(0,d/n)$.
\end{prop}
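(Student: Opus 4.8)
The plan is to prove this by a character-sum argument showing that each coset of $H$ is equidistributed finely enough to meet the short interval $(0,d/n)$. Write $G=(\Z/d\Z)^\times/H$, a group of order $2g$, and let $\widehat G$ denote the set of characters of $(\Z/d\Z)^\times$ that are trivial on $H$; these are precisely the $2g$ characters of $G$ pulled back to $(\Z/d\Z)^\times$, and I regard each $\chi\in\widehat G$ as a Dirichlet character modulo $d$ (so that $\chi(x)=0$ whenever $\gcd(x,d)>1$). For a fixed $a\in(\Z/d\Z)^\times$, orthogonality gives the exact count
\[
N(a):=\#\{x:0<x<d/n,\ x\bmod d\in aH\}
=\frac1{2g}\sum_{\chi\in\widehat G}\overline{\chi(a)}\sum_{0<x<d/n}\chi(x).
\]
First I would isolate the contribution of the principal character $\chi_0$, which produces the main term $M/(2g)$, where $M=\#\{0<x<d/n:\gcd(x,d)=1\}$. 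By the elementary estimate $M=\phi(d)/n+O(2^{\omega(d)})$ together with the classical lower bound $\phi(d)\gg d/\log\log d$, this main term is $\gg\phi(d)/(gn)\gg d/(gn\log\log d)$.

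The remaining $2g-1$ characters are nonprincipal, and for each the inner sum $S(\chi)=\sum_{0<x<d/n}\chi(x)$ is an incomplete Dirichlet character sum. The technical heart of the argument is to bound these by the P\'olya--Vinogradov inequality, which for a primitive nonprincipal character modulo $q$ gives $\bigl|\sum_{x\le N}\chi(x)\bigr|\ll\sqrt q\,\log q$. Since the characters occurring here need not be primitive, I would reduce to the primitive case by writing $\chi$ in terms of the primitive character modulo its conductor $q\mid d$ and removing the coprimality-to-$d$ condition by M\"obius inversion; this introduces a divisor sum of length $2^{\omega(d)}$ and yields $|S(\chi)|\ll 2^{\omega(d)}\sqrt d\,\log d=O(d^{1/2+\epsilon})$ for every $\epsilon>0$.

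Assembling these estimates, since there are only $2g-1$ error terms,
\[
N(a)\ \ge\ \frac1{2g}\Bigl(M-\sum_{\chi\ne\chi_0}|S(\chi)|\Bigr)\ \gg\ \frac{d}{gn\log\log d}-O\bigl(d^{1/2+\epsilon}\bigr),
\]
and because $g$ and $n$ are fixed the main term dominates once $d$ exceeds a bound $d_0$ depending only on $n$ and $g$. In particular $N(a)>0$, so every coset $aH$ meets $(0,d/n)$, which is exactly the assertion.

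I expect the principal obstacle to be the error-term bookkeeping for the incomplete character sums rather than any conceptual difficulty: one must treat the imprimitive characters carefully (passing from $\chi$ to the primitive character modulo its conductor) and verify that the resulting bound $O(d^{1/2+\epsilon})$ is genuinely beaten by the main term. The comparison is slightly delicate because it must rest on $\phi(d)\gg d/\log\log d$ rather than the naive $\phi(d)\asymp d$, as $\phi(d)/d$ can be as small as a constant times $1/\log\log d$; fortunately $d^{1/2+\epsilon}=o\bigl(d/\log\log d\bigr)$, so the inequality still holds for all sufficiently large $d$, with the threshold $d_0$ depending only on $n$ and $g$.
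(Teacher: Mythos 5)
Your proof is correct, but it takes a genuinely different (and more quantitative) route than the paper's. The paper also begins with the orthogonality decomposition over the characters of $(\Z/d\Z)^\times$ trivial on $H$, but then invokes the Weyl equidistribution criterion: it bounds the \emph{complete} exponential sums $\sum_{b\in(\Z/d\Z)^\times}\chi(b)\zeta_d^{ab}$ by standard Gauss-sum estimates, obtaining the bound $2g\sqrt{ad}/\phi(d)$ for the normalized Weyl sums, concludes that the points $e^{2\pi i b/d}$ with $b$ in the coset become equidistributed on the circle as $d\to\infty$, and deduces softly that the arc corresponding to $(0,d/n)$ is eventually hit. You instead count the coset elements in the interval directly, which forces you to estimate the \emph{incomplete} sums $\sum_{0<x<d/n}\chi(x)$; that is where P\'olya--Vinogradov, the passage to the primitive character underlying each imprimitive $\chi$, and the M\"obius/divisor bookkeeping enter --- none of which the paper needs, since its sums are complete. (Your care with imprimitivity is warranted and correctly handled: any non-principal character mod $d$ that is trivial on $H$ is induced by a non-principal primitive character, so P\'olya--Vinogradov applies to it.) What your approach buys is effectivity: you get the explicit lower bound $N(a)\ge \phi(d)/(2gn)-O(d^{1/2+\epsilon})$ with absolute implied constants, hence in principle an explicit $d_0(n,g)$, which is directly relevant to the paper's later remarks on effectivity (where $d_0=24$ for $g=1$, $n=2$ is obtained only via unpublished computation). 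What the paper's route buys is brevity: the complete Gauss sums are bounded in one line and Weyl's criterion absorbs the interval argument, at the cost of being a purely limiting, ineffective-as-stated argument. At bottom the two proofs exploit the same cancellation --- P\'olya--Vinogradov is itself proved by completing the incomplete sum via Gauss sums --- so yours is essentially a quantitative unpacking of the paper's qualitative one.
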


\begin{proof}
  The proposition follows from the following more precise statement:
  If for each $d$ we have a subgroup $H_d\subset G_d:=(\Z/d\Z)^\times$
  of index $\le 2g$, then as $d\to\infty$ the subsets $\{e^{2\pi i
    b/d}|b\in H_d\}$ become equidistributed with respect to Haar
  measure on the circle.

  This equidistribution statement follows easily from the Weyl
  equidistribution criterion (see \cite{Weyl16} or
  \cite[4.2]{SteinShakarchiFA})  and standard estimates for Gauss
  sums (see \cite[2.1.6]{CohenNT1}).
  What is to be shown is that for all integers $a>0$, the quantity 
$$\frac{1}{|H_d|}\sum_{b\in H_d}\zeta_d^{ab}$$
tends to zero as $d\to\infty$.  We have
\begin{align*}
\left|\frac{1}{|H_d|}\sum_{b\in H_d}\zeta_d^{ab}\right|
&=\frac{1}{|H_d|}\left|\sum_{b\in G_d}
   \left(\frac{1}{[G_d:H_d]}\sum_{\chi\in\widehat{G_d/H_d}}\chi(b)\right)
    \zeta_d^{ab}\right|\\
&\le\frac{1}{\phi(d)}\sum_{\chi\in\widehat{G_d/H_d}}
   \left|\left(\sum_{b\in G_d}\chi(b)\zeta_d^{ab}\right)\right|\\
&\le\frac{2g}{\phi(d)}\sqrt{ad} 
\end{align*}
where the last step uses standard estimates for Gauss sums.  The
last displayed quantity goes to zero as $d\to\infty$.  This
establishes the desired equidistribution.  It follows that for large
$d$, each coset of $H_d$ has an element whose least residue lies in
$(0,d/n)$, as desired.
\end{proof}

\section{Conclusion and complements}\label{s:conclusion}

\subsection{Proof of Theorem~\ref{thm:main}}
Fix integers $n$ and $g$ and let $d_0$ be the integer associated to
$n$ and $g$ by Proposition~\ref{prop:equidistribution}.  If necessary,
we may increase $d_0$ so that $d_0>n$.  Fix a polynomial $f$ of degree
$n$, form the curves $C_{f,d}$ for varying $d$, and consider the new
parts of their Jacobians $J^{new}_d$.

Suppose that $A$ is an abelian variety of dimension $\le g$ which
appears in $J^{new}_d$ for some $d>d_0$.  By
Proposition~\ref{prop:small-factor}, the characteristic polynomial of
$\zeta$ on $H^0(J^{new}_d,\Omega^1)$ has a factor $P$ with
coefficients in a number field $K$ of degree $\le2g$ over $\Q$.  By
Lemma~\ref{lemma:Galois} and Proposition~\ref{prop:equidistribution},
$P$ has a root of the form $\zeta^b$ with $0<b<d/n$ and $\gcd(b,d)=1$.
This implies that $V_b\neq0$, in contradiction to
Proposition~\ref{prop:ill-distribution}.  Therefore, no such $A$
exists, and this proves Theorem~\ref{thm:main}.\qed

\subsection{Effectivity}
It would be interesting to make the bound $d_0$ effective.  For $g=1$
and $n=2$, previous unpublished work of the first author shows that
Theorem~\ref{thm:main} holds with $d_0=10^{11}$. It can then be shown
computationally that in fact $d_0=24$ is sufficient.  At least insofar
as Proposition~\ref{prop:equidistribution} is concerned, this cannot
be improved further because $\{1,5,7,11\}$ is an index $2$ subgroup of
$(\Z/24\Z)^\times$.

We also note that if $f$ has degree $\ge3$ and no repeated roots, and
if $d$ is relatively prime to $n$ and sufficiently large ($d>24$
suffices), then by \cite[Cor.~1.11]{XueZarhin10}, $J_d$ is not
isogenous to a product of elliptic curves.

\subsection{Characteristic $p$}
We note that the restriction to characteristic zero is essential:
Theorem~\ref{thm:main0} and similar results are spectacularly false in
characteristic $p$.  Indeed, Tate and Shafarevich
\cite{TateShafarevich67} showed that the Jacobian of the Fermat curve
of degree $p^f+1$ is isogenous to a product of elliptic curves.  More
generally, a construction in \cite[\S10]{Ulmer07b} yields many
superelliptic curves whose Jacobians have an elliptic curve as factor
with large multiplicity in the new part of their Jacobians.

All these examples involve supersingular elliptic curves.  Comparing
with \cite[Thm.~3.3]{Ulmer07c}, one might hope that introducing
restrictions like ordinarity or positive $p$-rank on small dimensional
factors $A$ might yield a true variant of Theorem~\ref{thm:main0}.

\subsection{Rational functions}
Finally, it would be natural to allow $f$ to be a rational function,
not just a polynomial, in Theorem~\ref{thm:main0}.  We expect the
result to continue to hold in this generality, but our proof breaks
down because allowing poles in $f$ makes the ``ill-distribution''
result of Proposition~\ref{prop:ill-distribution} false.  For example,
if $f$ is a rational function with distinct zeroes and poles, then the
dimensions of all of the $V_j$ with $j\in(\Z/d\Z)^\times$ are the
same.  Of course, if $f$ is a rational function, the curve defined by
$y^d=f(x)$ can also be defined (using a suitable change of variables)
by an equation $y^d=h(x)$ with $h$ a polynomial, but now $h$ will have
large degree and this also spoils our argument.

Results in the opposite direction, namely examples where a given
small-dimensional abelian variety appears in $J^{new}_d$ for
infinitely many $d$, would also very interesting due to their
relevance for ranks of abelian varieties and questions like those of
Ekedahl and Serre.

\bibliography{database}{}

\def\cprime{$'$}
\begin{thebibliography}{Ulm07b}

\bibitem[Arc03]{Archinard03}
N.~Archinard.
\newblock Hypergeometric abelian varieties.
\newblock {\em Canad. J. Math.}, 55:897--932, 2003.

\bibitem[Coh07]{CohenNT1}
H.~Cohen.
\newblock {\em Number theory. {V}ol. {I}. {T}ools and {D}iophantine equations},
  volume 239 of {\em Graduate Texts in Mathematics}.
\newblock Springer, New York, 2007.

\bibitem[ES93]{EkedahlSerre93}
T.~Ekedahl and J.-P. Serre.
\newblock Exemples de courbes alg\'ebriques \`a jacobienne compl\`etement
  d\'ecomposable.
\newblock {\em C. R. Acad. Sci. Paris S\'er. I Math.}, 317:509--513, 1993.

\bibitem[Mum08]{MumfordAV}
D.~Mumford.
\newblock {\em Abelian varieties}, volume~5 of {\em Tata Institute of
  Fundamental Research Studies in Mathematics}.
\newblock Published for the Tata Institute of Fundamental Research, Bombay,
  2008.

\bibitem[SS03]{SteinShakarchiFA}
E.~M. Stein and R.~Shakarchi.
\newblock {\em Fourier analysis}, volume~1 of {\em Princeton Lectures in
  Analysis}.
\newblock Princeton University Press, Princeton, NJ, 2003.
\newblock An introduction.

\bibitem[TS67]{TateShafarevich67}
J.~T. Tate and I.~R. {S}hafarevich.
\newblock The rank of elliptic curves.
\newblock {\em Soviet Math. Dokl.}, 8:917--920, 1967.

\bibitem[Ulm07a]{Ulmer07c}
D.~Ulmer.
\newblock Jacobi sums, {F}ermat {J}acobians, and ranks of abelian varieties
  over towers of function fields.
\newblock {\em Math. Res. Lett.}, 14:453--467, 2007.

\bibitem[Ulm07b]{Ulmer07b}
D.~Ulmer.
\newblock {$L$}-functions with large analytic rank and abelian varieties with
  large algebraic rank over function fields.
\newblock {\em Invent. Math.}, 167:379--408, 2007.

\bibitem[Wey16]{Weyl16}
H.~Weyl.
\newblock \"{U}ber die {G}leichverteilung von {Z}ahlen mod. {E}ins.
\newblock {\em Math. Ann.}, 77:313--352, 1916.

\bibitem[XZ10]{XueZarhin10}
J.~Xue and Y.~G. Zarhin.
\newblock Centers of {H}odge groups of superelliptic {J}acobians.
\newblock {\em Transform. Groups}, 15:449--482, 2010.

\end{thebibliography}
\bibliographystyle{alpha}

\end{document}